\newtheorem{theorem}{Theorem}
\newtheorem{prop}[theorem]{Proposition}
\newtheorem{remark}{Remark}
\newenvironment{proof-sketch}{\noindent{\bf Sketch of Proof}\hspace*{1em}}{\qed\bigskip}
\DeclareMathOperator*{\essinf}{ess\,inf}
\newcommand{\RR}{\mathbb R}
\newcommand{\NN}{\mathbb N}
\renewcommand{\leq}{\leqslant}
\renewcommand{\geq}{\geqslant}
\begin{document}

\title[Singular Dirichlet problems]{Positive solutions for a class of singular Dirichlet problems}

\author[N.S. Papageorgiou]{Nikolaos S. Papageorgiou}
\address[N.S. Papageorgiou]{National Technical University, Department of Mathematics,
				Zografou Campus, Athens 15780, Greece \& Institute of Mathematics, Physics and Mechanics, 1000 Ljubljana, Slovenia}
\email{\tt npapg@math.ntua.gr}

\author[V.D. R\u{a}dulescu]{Vicen\c{t}iu D. R\u{a}dulescu}
\address[V.D. R\u{a}dulescu]{Faculty of Applied Mathematics, AGH University of Science and Technology, 30-059 Krak\'ow, Poland \& Institute of Mathematics, Physics and Mechanics, 1000 Ljubljana, Slovenia}
\email{\tt vicentiu.radulescu@imfm.si}

\author[D.D. Repov\v{s}]{Du\v{s}an D. Repov\v{s}}
\address[D.D. Repov\v{s}]{Faculty of Education and Faculty of Mathematics and Physics, University of Ljubljana \& Institute of Mathematics, Physics and Mechanics, 1000 Ljubljana, Slovenia}
\email{\tt dusan.repovs@guest.arnes.si}

\keywords{Singular term, superlinear perturbation, weak comparison, order cone.\\
\phantom{aa} 2010 Mathematics Subject Classification: 35J20, 35J60, 35J75}

\begin{abstract}
We consider a Dirichlet elliptic problem driven by the Laplacian with singular and superlinear nonlinearities. The singular term appears on the left-hand side while the superlinear perturbation is parametric with parameter $\lambda>0$ and it need not satisfy the AR-condition. Having as our starting point the work of Diaz-Morel-Oswald (1987), we show that there is a critical parameter value $\lambda_*$ such that for all $\lambda>\lambda_*$ the problem has two positive solutions, while for $\lambda<\lambda_*$ there are no positive solutions. What happens in the critical case $\lambda = \lambda_*$ is an interesting open problem.
\end{abstract}

\maketitle

\section{Introduction}

Let $\Omega \subseteq \RR^{N}(\NN \geq 2)$ be a bounded domain with a $C^2$-boundary $\partial\Omega$. In this paper we study the following parametric singular Dirichlet problem
\begin{equation}
-\Delta u(z)+ u(z)^{-\gamma} = \lambda f(z,u(z)) \ \mbox{in} \ \Omega, \  u|_{\partial\Omega} = 0,\ u>0,\ 0<\gamma<1.
\tag{$P_\lambda$}\label{eqp}
\end{equation}

In this problem, $\lambda$ is a positive parameter and $f:\Omega \times \RR \rightarrow \RR$ is a Carath\'eodory function (that is, for all $x\in\RR$, the mapping $z\mapsto f(z,x)$ is measurable and for almost all $z \in \Omega $, the mapping $x\mapsto f(z,x)$ is continuous). We assume that for almost all $z\in \Omega$, $ f(z,\cdot)$ exhibits superlinear growth near $+\infty$, but it need not satisfy the usual in such cases Ambrosetti-Robinowitz condition (the AR-condition for short).

The distinguishing feature of our work is that the singular term $u^{-\gamma}$ appears on the left-hand side of the equation. This is in contrast with almost all previous works on singular elliptic equations driven by the Laplacian, where the forcing term (the right-hand side of the equation) is $u\mapsto u^{-\gamma}+\lambda f(z,u)$, so the singular term $u^{-\gamma}$ appears on the right-hand side of the equation. We mention the works of Coclite \& Palmieri \cite{2}, Sun, Wu \& Long \cite{13}, and Haitao \cite{7}, which also deal with equations that have the competing effects of singular and superlinear terms. A comprehensive bibliography on semilinear singular Dirichlet problems can be found in the book by Ghergu \& R\u{a}dulescu \cite{5}. The present class of singular equations was first considered by Diaz, Morel \& Oswald \cite{3}, for the case when the perturbation $f$ is independent of $u$. They produced a necessary and sufficient condition for the existence of positive solutions in terms of the integral $\int_{\Omega} f\hat{u}_1dz$, with $\hat{u}_1$ being the positive $L^2$-normalized principal eigenfunction of $(-\Delta, H_0^1 (\Omega))$. More recently, Papageorgiou \& R\u{a}dulescu \cite{11} considered problem \eqref{eqp} with $f(z,\cdot)$ being sublinear.

Our aim is to study the precise dependence of the set of positive solutions of problem \eqref{eqp} with respect to the parameter $\lambda>0$. In this direction, we show that there exists a critical parameter value $\lambda_* >0$ such that
\begin{itemize}
	\item for all $\lambda > \lambda_*$, problem \eqref{eqp} has at least two positive smooth solutions;
	\item for all $\lambda\in (0,\lambda_*)$, problem \eqref{eqp} has no positive solutions.
\end{itemize}

It is an open problem what happens in the critical case $\lambda = \lambda_*$. We describe the difficulties one encounters when treating the critical case $\lambda = \lambda_*$ and why we think that $\lambda_*>0$ is not admissible.

\section{Preliminaries and hypotheses}

Let $X$ be a Banach space and $\varphi \in C^1 (X,\RR)$. We say that $\varphi(\cdot)$ satisfies the ``C-condition", if the following property holds
$$
\begin{array}{ll}
``\mbox{Every sequence}\ \{u_n\}_{n\geq1}\subseteq X\ \mbox{such that}\\
\{\varphi(u_n)\}_{n\geq1}\subseteq\RR\ \mbox{is bounded and}
\ (1+||u_n||_X)\varphi'(u_n)\rightarrow0\ \mbox{in}\ X^*\ \mbox{as}\ n\rightarrow\infty,\\
\mbox{admits a strongly convergent subsequence}".
\end{array}
$$

This is a compactness-type condition on the functional $\varphi$  and it leads to the minimax theory of the critical values of $\varphi$ (see, for example, Papageorgiou, R\u{a}dulescu \& Repov\v{s} \cite{12}).

The main spaces used in the analysis of problem \eqref{eqp} are the Sobolev space $H_0^1(\Omega)$ and the Banach space $C_0^1 (\overline{\Omega})=\{ u\in C^1(\overline{\Omega}): u|_{\partial\Omega} = 0\}$. By $||\cdot||$ we denote the norm of $H_0^1(\Omega)$. On account of the Poincar\'e inequality we have
$$ ||u|| = ||Du||_2\ \mbox{for all}\ u\in H_0^1(\Omega). $$

The Banach space $C_0^1(\overline{\Omega})$ is an ordered Banach space with positive cone $$C_+ = \{ u\in C_0^1(\overline{\Omega}): u(z) \geq 0\ \mbox{for all}\ z \in \overline{\Omega}\}.$$ This cone has a nonempty interior given by
$$
{\rm int}\, C_+ = \left\{u\in C_+ : u(z) >0 \ \mbox{for all} \ z\in \Omega , \frac{\partial u}{\partial n} |_{\partial \Omega} <0 \right\},
$$
with $n(\cdot)$ being the outward unit normal on $\partial \Omega$.

We will also use two other ordered Banach spaces, namely  $C^1(\overline{\Omega})$ and $$C_0(\overline{\Omega}) =\{ u \in C(\overline{\Omega}): u |_{\partial \Omega} =0\}.$$ The order cones are $$\hat{C}_+ = \{ u\in C^1 (\overline{\Omega}):u(z)\geq 0\ \mbox{for all}\ z \in\overline{\Omega}\}$$ and $$K_+ = \{ u\in C_0 (\overline{\Omega}):u(z) \geq 0\ \mbox{for all}\ z\in \overline{\Omega}\},$$ respectively. Both have nonempty interiors given by
$$
\begin{array}{ll}
	D_+ = \{ u\in \hat{C}_+ : u(z) > 0 \ \mbox{for all} \ z\in \overline{\Omega}\},\\
	\mathring{K}_+ = \{ u\in K_+ : c_u \hat{d} \leq u \ \mbox{for some}\ c_u>0\},
\end{array}
$$
with $\hat{d}(z)=d(z,\partial\Omega)$ for all $z\in\overline{\Omega}$.

Concerning ordered Banach spaces, the following result is helpful (see Papageorgiou, R\u{a}dulescu \& Repov\v{s} \cite[Proposition 4.1.22, p. 226]{12}).
\begin{prop}\label{prop1}
	If $X$ is an ordered Banach space with order (positive) cone $K$, ${\rm int}\,K \neq \emptyset$, and $e\in {\rm int}\,K$, then for every $u \in X$, we can find $\lambda_u >0$ such that $\lambda_u e -u\in K$.
\end{prop}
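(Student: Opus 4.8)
The plan is to exploit the defining feature of an interior point of the cone: a whole ball around $e$ sits inside $K$. The strategy is to absorb a suitably small multiple of $u$ into that ball, so that the perturbed point stays in $K$, and then to rescale using the fact that $K$ is stable under multiplication by positive scalars. This is a standard ``absorption'' argument for order cones with nonempty interior.

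First I would dispose of the trivial case $u=0$: since $e\in {\rm int}\,K\subseteq K$ and $K$ is a cone, any $\lambda_u>0$ yields $\lambda_u e\in K$. So from now on assume $u\neq 0$. Because $e\in {\rm int}\,K$, there exists $r>0$ such that the open ball $\{y\in X:||y-e||<r\}$ is contained in $K$. I would then introduce the auxiliary point
$$ v = e - \frac{r}{2||u||}\,u, $$
whose distance from $e$ is $||v-e|| = \frac{r}{2} < r$, so that $v\in K$ by the choice of $r$.

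The final step invokes the cone property (closure under multiplication by nonnegative scalars): multiplying $v$ by the positive constant $\frac{2||u||}{r}$ gives
$$ \frac{2||u||}{r}\,v = \frac{2||u||}{r}\,e - u \in K, $$
so the choice $\lambda_u = \frac{2||u||}{r}$ proves the claim.

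There is no genuine obstacle here; the whole proof reduces to a one-line normalization estimate. The only point deserving a little care is the calibration of the coefficient in front of $u$, namely choosing it as $\frac{r}{2||u||}$ so that the perturbed point $v$ remains strictly inside the admissible ball of radius $r$ about $e$, together with the observation that the subsequent rescaling is legitimate precisely because $K$ is a cone and hence invariant under positive homotheties.
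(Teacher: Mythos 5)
Your proof is correct: the absorption argument (a ball of radius $r$ about $e$ lies in $K$, so $e-\tfrac{r}{2\|u\|}u\in K$, and rescaling by the positive factor $\tfrac{2\|u\|}{r}$ gives $\lambda_u e-u\in K$) is exactly the standard proof of this fact, and you have handled the case $u=0$ separately as needed. The paper itself offers no proof, only a citation to Proposition 4.1.22 of Papageorgiou, R\u{a}dulescu \& Repov\v{s}, where the argument is precisely the one you give.
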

	
	Next, we introduce the main notation which we will use in the sequel. Given $\varphi \in C^1(H_0^1(\Omega))$, we denote by $K_\varphi$ the critical set of $\varphi$, that is,
	$$
	K_\varphi = \{ u\in H_0^1 (\Omega): \varphi'(u) = 0 \}.
	$$

For $x\in \RR$, we set $x^\pm = \max\{\pm x,0\}$. Given $u\in H_0^1(\Omega)$, we set $u^\pm (z) = u(z)^\pm$ for all $z\in \Omega$. We know that
$$
	u^\pm \in H_0^1(\Omega),\ u=u^+ - u^- ,\ |u|=u^++u^-.
$$

	Given $u , y  \in H_0^1(\Omega)$ with $u\leq y$, we define
$$
\begin{array}{ll}
	&[u,y] = \{ h\in H_0^1(\Omega): u(z)\leq h(z)\leq y(z)\ \mbox{for almost all}\ z\in \Omega \},\\
	&[u) = \{ h\in H_0^1 (\Omega): u(z)\leq h(z)\ \mbox{for almost all}\ z\in \Omega \}.
\end{array}
$$

Also, by
$$
{\rm int}_{C_0^1(\overline{\Omega})} [u,y]
$$
we denote the interior of $[u,y]\cap C_0^1 (\overline{\Omega})$  in the $C_0^1(\overline{\Omega})$-norm topology.

By $\hat{\lambda}_1 >0$ we denote the principal eigenvalue of $(-\Delta, H_0^1(\Omega))$ and by $\hat{u}_1$ the corresponding positive $L^2$-normalized (that is, $||\hat{u}_1||_2 = 1$) eigenfunction. Standard regularity theory and the Hopf maximum principle imply that $\hat{u}_1\in  {\rm int}\,C_+$.

Finally, by $2^*$ we denote the critical Sobolev exponent, $2^* = \left\{
		\begin{array}{ll}
			\frac{2N}{N-2}&\mbox{if}\ N\geq 3 \\
			+\infty&\mbox{if}\ N=2
		\end{array}
	\right.$.	
	
Now we will introduce our hypotheses on the perturbation $f(z,x)$.

\smallskip
$H(f):f:\Omega \times \RR \rightarrow \RR$ is a Carath\'eodory function such that $f(z,0)=0$ for almost all $z\in \Omega$ and
\begin{itemize}
	\item [(i)] $f(z,x) \leq a(z) (1+x^{r-1})$ for almost all $z\in\Omega$ and all $x\geq 0$, with $a\in L^{\infty}(\Omega)$, $2<r<2^*$;
	\item [(ii)] if $F(z,x) = \int_{0}^x f(z,s)ds$, then $\lim_{x\to +\infty} \frac{F(z,x)}{x^2} = +\infty$ uniformly for almost all $z\in \Omega$;
	\item [(iii)] there exists $\tau \in (r-2,2^*)$ such that
	 	$$
	 	0 < \beta_0 \leq \liminf_{x\to \infty} \frac{f(z,x)x- 2F(z,x)}{x^\tau}\ \mbox{uniformly for almost all}\ z\in \Omega;
	 	$$
	 \item [(iv)] for every $\rho>0$ and every $\lambda>0$, there exists $\hat{\xi}_\rho^\lambda>0$ such that for almost all $z\in\Omega$, the function
	 	$$
	 	x\mapsto \lambda f(z,x) + \hat{\xi}_\rho^{\lambda}x
	 	$$
	 	is nondecreasing on $[0,\rho]$ and for every $s>0$ we have
	 	$$
	 	\inf \{f(z,x): x\geq s\} = m_s >0\ \mbox{for almost all}\ z\in \Omega;
		$$
	 \item [(v)] there exist $q>2$, $\delta_0>0,\hat{c}>0$ such that
	 	$$
	 	\begin{array}{ll}
	 		\hat{c} x^{q-1} \leq f(z,x) \ \mbox{for almost all}\ z\in \Omega \ \mbox{and all}\ 0 \leq x \leq \delta_0,\\
	 		\lim_{x\to 0^+} \frac{F(z,x)}{x^2} = 0 \ \mbox{uniformly for almost all}\ z\in \Omega.
	 	\end{array}
	 	$$
\end{itemize}

\begin{remark}
	Since we are looking for positive solutions and all of the above hypotheses concern the positive semiaxis $\RR_+ = [0,+\infty)$,  we may assume without any loss of generality that
\begin{equation}\label{eq1}
	f(z,x)=0 \ \mbox{for almost all}\ z\in \Omega\ \mbox{and all}\ x \leq 0.
\end{equation}
\end{remark}

Hypotheses H(f)(ii),(iii) imply that
$$ \lim_{x\to +\infty} \frac{f(z,x)}{x} = +\infty \ \mbox{uniformly for almost all}\ z\in \Omega .$$
So, the perturbation $f(z,\cdot)$ is superlinear. However, we do not express this superlinearity of $f(z,\cdot)$ by using the traditional (for superlinear problems) AR-condition. We recall that the AR-condition (the unilateral version due to \eqref{eq1}) says that there exist $\vartheta>2$ and $M>0$ such that
\begin{subequations}
	\begin{align}
	&0<\vartheta F(z,x) \leq f(z,x) x \ \mbox{for almost all}\ \ z\in \Omega\ \mbox{and all}\ x \geq M \label{eq2a}\\
	&0 < \essinf_{\Omega} F(\cdot,M). \label{eq2b}
	\end{align}
\end{subequations}

Integrating \eqref{eq2a} and using \eqref{eq2b}, we obtain the following weaker condition
\begin{eqnarray*}
	&&c_1 x^\vartheta \leq F(z,x)\ \mbox{for almost all}\ z\in \Omega,\ \mbox{all}\ x \geq M,\ \mbox{and some}\ c_1 >0,\\
	&\Rightarrow &c_1 x^{\vartheta -1} \leq f(z,x)\ \mbox{for almost all}\ z\in \Omega\ \mbox{and all}\ x \geq M \ \mbox{(see \eqref{eq2a})}.
\end{eqnarray*}

So, the AR-condition dictates at least $(\vartheta - 1)$-polynomial growth for $f(z,\cdot)$. Here, instead of the AR-condition, we employ hypothesis $H(f)(iii)$ which is less restrictive and incorporates in our framework superlinear nonlinearities with ``slower" growth near $+\infty$. Consider the following function (for the sake of simplicity we drop the z-dependence)
$$
f(x) = \left\{
		\begin{array}{ll}
			c x^{q-1}&\mbox{if}\ 0 \leq x \leq 1\\
			x \ln x + c x^{\vartheta-1}&\mbox{if}\ 1<x
		\end{array}
	\right.
$$
with $c>0,q>2>\vartheta>1$ (see \eqref{eq1}). Then $f(\cdot)$ satisfies hypotheses $H(f)$ but it fails to satisfy the AR-condition.

Finally, we mention that for $u\in H_0^1(\Omega)$ we have
\begin{equation}
	c_u \hat{d} \leq u\ \mbox{for some}\ c_u>0\ \mbox{if and only if}\ \hat{c}_u\hat{u}_1 \leq u\ \mbox{for some}\ \hat{c}_u >0.
	\label{eq3}
\end{equation}

For $\delta>0$ let $\Omega_\delta = \{ z\in \Omega: d(z,\partial \Omega) < \delta \}$ and let $\tilde{C}^1(\Omega_\delta) = \{ u\in C^1(\overline{\Omega}_\delta): u|_{\partial \Omega} = 0\}$ with order cone $$\tilde{C}^1(\overline{\Omega}_\delta)_+ = \{ u \in \tilde{C}^1 (\overline{\Omega}_\delta):u(z) \geq 0\ \mbox{for all}\ z\in \overline{\Omega}_\delta\},$$ which has nonempty interior given by $${\rm int}\, \tilde{C}^1(\overline{\Omega}_\delta)_+ = \left\{ u\in \tilde{C}^1 (\overline{\Omega}_\delta)_+:u(z)>0\ \mbox{for all}\ z\in \Omega_\delta\ \mbox{and}\ \frac{\partial u}{\partial n}|_{\partial\Omega}<0 \right\}.$$ 
According to Lemma 14.16 of Gilbarg \& Trudinger \cite[p. 355]{6}, for $\delta >0$ small enough we have $\hat{d}\in {\rm int}\, \tilde{C}^1(\overline{\Omega}_\delta)_+.$ Also, we have $\hat{d}\in D_+(\overline{\Omega}\backslash \Omega_\delta)$, with the latter being the interior of the order cone of $C^1(\overline{\Omega}\backslash \Omega_\delta)$. So, using Proposition \ref{prop1} we can find $0<\hat{c}_1<\hat{c}_2$ such that $\hat{c_1}\hat{d}\leq \hat{u}_1\leq \hat{c}_2\hat{d}$ (recall that $\hat{u}_1 \in {\rm int}\,C_+$). This implies \eqref{eq3}.

\section{Positive solutions}

Let $\eta >0$. We start by considering the following auxiliary purely singular Dirichlet problem
\begin{equation}
	-\Delta u(z) + u(z)^{-\gamma} = \eta \hat{u}_1(z) \ \mbox{in}\ \Omega ,\ u|_{\partial \Omega} = 0
	\tag*{$(Au)_\eta$}.\label{eqAu}
\end{equation}

By Theorem 1 of Diaz, Morel \& Oswald \cite{3} we know that for $\eta >0$ big problem \ref{eqAu} has a solution $v_\eta\in H_0^1(\Omega)\cap C_0(\overline{\Omega})$ and $v^{-\gamma}_\eta \in L^1(\Omega), c_\eta \hat{u}_1 \leq v_\eta$ for some $c_\eta >0.$

Also, we consider the following Dirichlet problem
\begin{equation}
	-\Delta u(z) = \lambda f(z,u(z)) \ \mbox{in}\ \Omega,\ u|_{\partial\Omega},\ u>0,\ \lambda>0.
	\tag{$Q_\lambda$}\label{eqq}
\end{equation}

\begin{prop}\label{prop2}
	If hypotheses $H(f)$ hold and $\lambda>0$, then problem \eqref{eqq} has a solution $\hat{u}_\lambda\in {\rm int}\, C_+$.
\end{prop}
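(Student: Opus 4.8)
The plan is to obtain $\hat{u}_\lambda$ as a nontrivial critical point of the energy functional
$$
\varphi_\lambda(u) = \frac{1}{2}||Du||_2^2 - \lambda\int_\Omega F(z,u)\,dz, \qquad u\in H_0^1(\Omega),
$$
via the mountain pass theorem. Because of the convention \eqref{eq1} that $f(z,\cdot)$, hence $F(z,\cdot)$, vanishes on $(-\infty,0]$, every critical point is automatically nonnegative: testing $\varphi_\lambda'(u)=0$ with $-u^-$ annihilates the reaction term and forces $||Du^-||_2^2=0$, so $u\geq 0$. This is what will eventually let us produce a \emph{positive} solution rather than just a sign-changing one.

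For the mountain pass geometry I would argue as follows. Near the origin I use $H(f)(v)$ together with $H(f)(i)$: the limit $\lim_{x\to 0^+}F(z,x)/x^2=0$ yields, for any $\varepsilon>0$, an estimate $F(z,x)\leq \varepsilon x^2 + c_\varepsilon x^r$, and combined with the Poincar\'e inequality and the Sobolev embedding $H_0^1(\Omega)\hookrightarrow L^r(\Omega)$ (recall $r<2^*$) this gives
$$
\varphi_\lambda(u)\geq \left(\frac{1}{2}-\frac{\lambda\varepsilon}{\hat{\lambda}_1}\right)||u||^2 - \lambda c\,||u||^r.
$$
Choosing $\varepsilon$ small and then $||u||=\rho$ small, and using $r>2$, I get $\varphi_\lambda|_{\partial B_\rho}\geq m_\rho>0$. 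For the far condition, $H(f)(ii)$ (superquadratic growth of $F$) forces $\varphi_\lambda(t\hat{u}_1)\to -\infty$ as $t\to+\infty$ by a Fatou argument, so some multiple of $\hat{u}_1$ sits below the barrier.

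The hard part will be verifying the C-condition, and specifically the boundedness of a Cerami sequence $\{u_n\}$, since this is where $H(f)(iii)$ must substitute for the AR-condition. If $||u_n||\to\infty$, set $y_n=u_n/||u_n||$ with $y_n\rightharpoonup y$. When $y\neq 0$, on $\{y>0\}$ one has $u_n(z)\to+\infty$, so $H(f)(ii)$ and Fatou give $\int_\Omega F(z,u_n)/||u_n||^2\,dz\to+\infty$, contradicting $\varphi_\lambda(u_n)/||u_n||^2\to 0$. When $y=0$ I would use the standard device of maximising $t\mapsto\varphi_\lambda(tu_n)$ over $[0,1]$: since $y_n\to 0$ in $L^r(\Omega)$ the rescaled energies satisfy $\varphi_\lambda(sy_n)\to s^2/2$ for each fixed $s$, so the maxima blow up, $\varphi_\lambda(t_nu_n)\to+\infty$; then evaluating $2\varphi_\lambda(t_nu_n)=\lambda\int_\Omega[f(z,t_nu_n)(t_nu_n)-2F(z,t_nu_n)]\,dz$ and playing the lower bound from $H(f)(iii)$ against the a priori bound $\lambda\int_\Omega[f(z,u_n)u_n-2F(z,u_n)]\,dz\leq M$ inherited from the Cerami sequence yields the contradiction. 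Once $\{u_n\}$ is bounded, $u_n\rightharpoonup u$ in $H_0^1(\Omega)$ and $u_n\to u$ in $L^r(\Omega)$; testing $\varphi_\lambda'(u_n)$ with $u_n-u$ and invoking the compact embedding gives $||Du_n||_2\to ||Du||_2$, whence $u_n\to u$ strongly by the Kadec--Klee property of the Hilbert norm (equivalently, the $(S)_+$ property of $-\Delta$).

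With the geometry and compactness in hand, the mountain pass theorem produces $\hat{u}_\lambda\in H_0^1(\Omega)$ with $\varphi_\lambda(\hat{u}_\lambda)\geq m_\rho>0$, so $\hat{u}_\lambda\neq 0$, and $\hat{u}_\lambda\geq 0$ by the remark above; thus $\hat{u}_\lambda$ solves $-\Delta\hat{u}_\lambda=\lambda f(z,\hat{u}_\lambda)$. The subcritical bound $H(f)(i)$ allows the usual elliptic bootstrap to $\hat{u}_\lambda\in C_0^1(\overline{\Omega})$. Finally, $H(f)(iv)$ and $H(f)(v)$ give $f(z,x)\geq 0$ for $x\geq 0$, so $-\Delta\hat{u}_\lambda\geq 0$ with $\hat{u}_\lambda\geq 0$ and $\hat{u}_\lambda\not\equiv 0$; the strong maximum principle and the Hopf boundary lemma then place $\hat{u}_\lambda\in{\rm int}\,C_+$, as required.
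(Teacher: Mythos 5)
Your proposal follows the same route as the paper's proof: the mountain pass theorem applied to $\Psi_\lambda(u)=\frac{1}{2}||Du||_2^2-\lambda\int_\Omega F(z,u)\,dz$, with the origin made a strict local minimizer via $H(f)(i),(v)$, descent along $t\hat u_1$ via $H(f)(ii)$, nonnegativity of critical points from the extension \eqref{eq1}, and elliptic regularity plus the Hopf lemma to land in ${\rm int}\,C_+$. (Your direct estimate $\varphi_\lambda(u)\geq c||u||^2-\lambda c'||u||^r$ on the sphere is in fact cleaner than the paper's detour through Theorem 5.7.6 of \cite{12}.) The one substantive divergence is the C-condition, which the paper simply outsources to \cite[Proposition 9]{10} while you try to prove it, and it is exactly there that your sketch does not close as written. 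In the subcase $y=0$ you use Jeanjean's maximization device, obtaining $2\varphi_\lambda(t_nu_n)=\lambda\int_\Omega[f(z,t_nu_n)t_nu_n-2F(z,t_nu_n)]\,dz\to+\infty$ with $t_n\in(0,1)$, and then propose to contradict the bound $\lambda\int_\Omega[f(z,u_n)u_n-2F(z,u_n)]\,dz\leq M$. But passing from the integrand evaluated at $u_n$ to the integrand evaluated at $t_nu_n$ requires a quasi-monotonicity of $x\mapsto f(z,x)x-2F(z,x)$ on $[0,\infty)$, which is \emph{not} among the hypotheses; $H(f)(iii)$ is only a one-sided growth condition and does not control the function at intermediate points $t_nu_n$. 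The intended use of $H(f)(iii)$ is more direct and makes the whole blow-up dichotomy unnecessary: the Cerami bounds give $\lambda\int_\Omega[f(z,u_n)u_n-2F(z,u_n)]\,dz\leq M_1$, hypothesis (iii) converts this into $||u_n^+||_\tau^\tau\leq M_2$, and then the interpolation $||u_n^+||_r\leq ||u_n^+||_\tau^{1-t}\,||u_n^+||_{2^*}^{t}$ combined with $||Du_n||_2^2\leq\lambda\int_\Omega f(z,u_n)u_n\,dz+\varepsilon_n\leq c(1+||u_n^+||_r^r)$ yields $||u_n||^2\leq c(1+||u_n||^{\kappa})$ with $\kappa=tr<2$ thanks to the restriction on $\tau$, whence boundedness. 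With that repair (or simply the citation the paper uses), the rest of your argument is correct and coincides with the paper's.
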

\begin{proof}

	Let $\Psi_\lambda : H_0^1 (\Omega) \rightarrow\RR$ be the $C^1$- functional defined by
	$$ \Psi_\lambda(u) = \frac{1}{2} ||Du||_2^2 - \int_{\Omega}\lambda F(z,u) dz\ \mbox{for all}\ u\in H_0^1(\Omega). $$
	
	Hypotheses $H(f)(ii),(iii)$ imply that
	\begin{equation}
		\Psi_\lambda(\cdot) \ \mbox{satisfies the C-condition}
		\label{eq4}
	\end{equation}
(see Papageorgiou \& R\u{a}dulescu \cite[Proposition 9]{10}).

Combining hypotheses $H(f)(i),(v)$, given $\epsilon >0$, we can find $c_\epsilon >0$ such that
$$ F(z,x) \leq \frac{\epsilon}{2	} x^2 +c_{\epsilon}x^r\ \mbox{for almost all}\ z\in \Omega, \ \mbox{all}\ x\geq0. $$
	
Then we have
\begin{equation}
	\begin{array}{ll}
		\Psi_\lambda (u) &\geq \frac{1}{2} ||Du||_2^2-\frac{\lambda\epsilon}{2} ||u||_2^2 -\lambda \hat{c}_\epsilon ||u||^r\ \mbox{for some}\ \hat{c}_\epsilon>0\\
		&\geq c_2 ||u||^2 -\lambda\hat{c}_\epsilon ||u||^r\ \mbox{for some}\ c_2=c_2 (\lambda)>0 \ \mbox{(choose}\ \epsilon>0 \ \mbox{small enough}),\\
		&\Rightarrow u=0 \ \mbox{is a local minimizer of}\ \Psi_\lambda(\cdot)\ \mbox{(recall that}\ r>2).
		\label{eq5}
	\end{array}
\end{equation}

We can easily see that if $u\in K_{\Psi_\lambda}$, then $u\geq0$. Hence we assume that $K_{\Psi_\lambda}$ is finite. On account of \eqref{eq5} and Theorem 5.7.6 of Papageorgiou, R\u{a}dulescu \& Repov\v{s} \cite[p. 367]{12}, we can find $\rho \in (0,1)$ so small that
\begin{equation}
	0 = \Psi_\lambda(0) < \inf\{\Psi_\lambda(u):||u||=\rho\} = m_\lambda.
	\label{eq6}
\end{equation}

Hypothesis $H(f)(ii)$ implies that
\begin{equation}
	\Psi_\lambda(t\hat{u}_1) \rightarrow -\infty \ \mbox{as}\ t\rightarrow +\infty.
	\label{eq7}
\end{equation}

Then \eqref{eq4}, \eqref{eq6}, \eqref{eq7} permit the use of the mountain pass theorem. So, we can find $\hat{u}_\lambda \in H_0^1(\Omega)$ such that
$$
\begin{array}{ll}
	\hat{u}_\lambda \in K_{\Psi_\lambda} \ \mbox{and}\ m_\lambda \leq \Psi_\lambda (\hat{u}_\lambda),\\
	\Rightarrow \hat{u}_\lambda	 \geq 0,\ \hat{u}_\lambda \neq 0\ \mbox{(see \eqref{eq6})}.
\end{array}
$$

We have
$$
\begin{aligned}
	&\int_{\Omega}(D\hat{u}_\lambda,Dh)_{\RR^N}dz = \lambda \int_{\Omega}f(z,u_\lambda) hdz \ \mbox{for all}\ h\in H_0^1(\Omega),\\
	\Rightarrow &-\Delta u_\lambda(z) = \lambda f(z,u_\lambda(z)) \geq 0\ \mbox{for almost all}\ z\in\Omega.
\end{aligned}
$$

Then the semilinear regularity theory (see Gilbarg \& Trudinger \cite{6}) and the Hopf maximum principle (see Gasinski \& Papageorgiou \cite{4}), imply that $\hat{u}_\lambda \in {\rm int}\,C_+$.
\end{proof}

Hypotheses $H(f)$ imply that we can find $c_2>0$ such that
\begin{equation}
	f(z,x)\geq c_2 min\{ x,x^{q-1}\} \ \mbox{for almost all}\ z\in\Omega \ \mbox{and all}\ x\geq 0.
	\label{eq8}	
\end{equation}

We have $\hat{u}_\lambda \in {\rm int}\,C_+$ and $\hat{u}_\lambda^{q-1}\in{\rm int}\,K_+$. So, we can find $c_3>0$ such that
\begin{equation}
	\begin{array}{ll}
		&\eta \hat{u}_1 \leq c_3 \hat{u}_\lambda \ \mbox{and}\ \eta\hat{u}_1 \leq c_3 \hat{u}_\lambda^{q-1},\\
		\Rightarrow &\eta\hat{u}_1 \leq c_3 \min\{\hat{u}_\lambda,\hat{u}_\lambda^{q-1} \}.
	\end{array}
	\label{eq9}
\end{equation}

From \eqref{eq8} and \eqref{eq9} we see that we can find $\lambda_0 \geq 0$ big such that for $\lambda \geq \lambda_0$
\begin{equation}
	\begin{array}{ll}
		\lambda f(z,\hat{u}_\lambda(z)) \geq \lambda_0c_2\, \min\{ \hat{u}_\lambda (z), \hat{u}_\lambda(z)^{q-1}\}\\
		\geq \eta \hat{u}_1(z) \ \mbox{for almost all}\ z\in\Omega.
	\end{array}
	\label{eq10}	
\end{equation}

Recall that $c_\eta \hat{u}_1 \leq v_\eta$ for some $c_\eta>0$. Hence by \eqref{eq3}, $\hat{c}_\eta \hat{d} \leq v_\eta$ for some $\hat{c}_\eta>0$. Therefore
$$ v_\eta^{-\gamma} \leq \frac{1}{\hat{c}_\eta^{\gamma}}\hat{d}^{-\gamma}. $$

For every $h\in H_0^1(\Omega)$, we have
$$
\begin{array}{ll}
	\int_{\Omega}\frac{|h|}{v_\eta^{\gamma}} dz \leq \frac{1}{\hat{c}_\eta^{\gamma}} \int_{\Omega}\frac{|h|}{\hat{d}^{\gamma}} dz\\
	= \frac{1}{\hat{c}_\eta^{\gamma}} \int_{\Omega} \frac{|h|}{\hat{d}} \hat{d}^{1-\gamma} dz \leq c_4 \int_{\Omega}\frac{|h|}{\hat{d}} dz\ \mbox{for some}\ c_4>0.
\end{array}
$$

Invoking Hardy's inequality (see Brezis \cite[p. 313]{1}), we infer that $\frac{|h|}{\hat{d}}\in L^2(\Omega)$. Therefore
$$
\begin{array}{ll}
	&c_4\int_{\Omega} \frac{|h|}{\hat{d}}dz \leq c_5 \left( \int_{\Omega}\frac{|h|^2}{\hat{d}^2}dz \right)^{\frac{1}{2}} <\infty \ \mbox{for some}\ c_5>0,\\
	\Rightarrow &| \int_{\Omega}v_\eta^{-\gamma}hdz |<\infty\ \mbox{for all}\ h\in H_0^1(\Omega).
\end{array}
$$

Therefore we have
\begin{equation}
	-\Delta v_\eta (z) + v_\eta (z)^{-\gamma} = \eta \hat{u}_1(z)\ \mbox{for almost all}\ z\in\Omega.
	\label{eq11}	
\end{equation}

If $\lambda \geq \lambda_0$, from \eqref{eq10} and \eqref{eq11} we have
\begin{equation}
	\begin{array}{rr}
	-\Delta \hat{u}_\lambda(z) = \lambda f(z,\hat{u}_\lambda(z)) \geq \eta \hat{u}_1(z) = -\Delta v_\eta(z) + v_\eta(z)^{-\gamma} \geq -\Delta v_\eta(z)\\
	\mbox{for almost all}\ z\in\Omega.
	\end{array}
	\label{eq12}	
\end{equation}

Since $v_\eta |_{\partial\Omega} = \hat{u}_\lambda|_{\partial\Omega}=0$, from \eqref{eq12} and the weak comparison principle (see Tolksdorf \cite[Lemma 3.1]{14}), we have
\begin{equation}
	v_\eta \leq \hat{u}_\lambda\quad (\lambda \geq \lambda_0).
	\label{eq13}	
\end{equation}

Now we introduce the following two sets
$$
\begin{array}{ll}
	&\mathcal{L} = \{ \lambda>0	:\ \mbox{problem \eqref{eqp} has a positive solution} \},\\
	&S_\lambda = \ \mbox{the set of positive solutions of \eqref{eqp}}.
\end{array}
$$

Here by a solution of \eqref{eqp}, following \cite{11}, we understand a function $u\in H_0^1(\Omega)$ such that
\begin{itemize}
	\item [(a)]	$u\in L^{\infty}(\Omega), u(z) >0$ for almost all $z\in\Omega$, and $u^{-\gamma}\in L^1(\Omega)$;
	\item [(b)] there exists $c_u>0$ such that $c_u\hat{d}\leq u;$ \label{itb}
	\item [(c)] $\int_{\Omega} (Du,Dh)_{\RR^N}dz + \int_{\Omega}u^{-\gamma}hdz = \lambda \int_{\Omega} f(z,u)hdz$ for all $h\in H_0^1(\Omega).$
\end{itemize}

From \eqref{eq3} we know that (b) is equivalent to saying that $\hat{c}_u\hat{u}_1 \leq u$ for some $\hat{c}_u>0$. Also the previous discussion reveals that (c) makes sense. Regularity theory will provide additional structure for the solutions of \eqref{eqp}.

\begin{prop}\label{prop3}
	If hypotheses $H(f)$ hold, then $\mathcal{L} \neq \emptyset$ and $S(\lambda) \subseteq{\rm int}\, C_+$.
\end{prop}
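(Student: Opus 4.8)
The plan is to establish the two claims separately. For the nonemptiness of $\mathcal{L}$, the natural strategy is to produce an actual positive solution of \eqref{eqp} for $\lambda$ large by combining the auxiliary objects already constructed: the purely singular solution $v_\eta$ of \ref{eqAu} and the solution $\hat{u}_\lambda \in {\rm int}\,C_+$ of the nonsingular problem \eqref{eqq} from Proposition \ref{prop2}. The key structural facts are \eqref{eq13}, namely $v_\eta \le \hat{u}_\lambda$ for $\lambda \ge \lambda_0$, together with $\hat{c}_\eta \hat{d}\le v_\eta$ and the integrability $v_\eta^{-\gamma}\in L^1(\Omega)$ (more precisely, that $\int_\Omega v_\eta^{-\gamma}|h|\,dz<\infty$ for all $h\in H_0^1(\Omega)$, established via Hardy's inequality in the discussion preceding \eqref{eq11}). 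First I would fix $\lambda\ge\lambda_0$ and truncate the reaction at the ordered pair $v_\eta \le \hat u_\lambda$: I introduce the modified singular term $u^{-\gamma}$ frozen appropriately and the Carath\'eodory truncation of $f(z,\cdot)$ to the interval $[v_\eta(z),\hat u_\lambda(z)]$, so that the resulting energy functional is coercive and weakly lower semicontinuous on $H_0^1(\Omega)$. Minimizing it yields a solution $u_\lambda$ of the truncated problem; the weak comparison principle (Tolksdorf \cite{14}), applied exactly as in the derivation of \eqref{eq13} but now against both barriers, shows $v_\eta \le u_\lambda \le \hat u_\lambda$. Because $u_\lambda$ lies above $v_\eta$ the truncations are inactive and $u_\lambda$ solves \eqref{eqp} in the weak sense (c); the lower bound $v_\eta$ forces $u_\lambda^{-\gamma}\in L^1$ and supplies the order estimate $c_{u_\lambda}\hat d\le u_\lambda$ required in (a),(b). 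Hence $\lambda\in\mathcal{L}$ and $\mathcal{L}\neq\emptyset$.

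For the regularity claim $S(\lambda)\subseteq {\rm int}\,C_+$, I would take an arbitrary $u\in S(\lambda)$ and upgrade its regularity in stages. By definition $u\in H_0^1(\Omega)\cap L^\infty(\Omega)$ with $c_u\hat d\le u$, so the singular term is controlled: $u^{-\gamma}\le c_u^{-\gamma}\hat d^{-\gamma}$ near $\partial\Omega$ and is bounded on compact subsets of $\Omega$. Rewriting the equation as $-\Delta u = \lambda f(z,u)-u^{-\gamma}$, the right-hand side belongs to $L^\infty$ on any interior subdomain, so interior elliptic regularity gives $u\in C^2$ in $\Omega$; the growth hypothesis $H(f)(i)$ keeps $\lambda f(z,u)$ in $L^\infty(\Omega)$ globally. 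The only delicate point is boundary behaviour, where $u^{-\gamma}$ blows up, but the two-sided estimate $c_u\hat d\le u$ (equivalently $\hat c_u\hat u_1\le u$ via \eqref{eq3}) together with the nonnegativity of $\lambda f(z,u)$ lets me insert $u$ between $v_\eta$-type singular barriers and conclude $u\in C_0^1(\overline\Omega)$ by the boundary regularity results already invoked for singular problems in \cite{11}.

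Finally, to place $u$ in the interior of the cone I would invoke the strong maximum principle and Hopf's lemma. Since $-\Delta u + \xi u = \lambda f(z,u) - u^{-\gamma} + \xi u$ and $u^{-\gamma}$ is integrable with $u\ge c_u\hat d>0$ in $\Omega$, for a suitable $\xi>0$ the operator $-\Delta+\xi$ applied to $u$ gives a nonnegative (indeed positive) right-hand side on $\Omega$, so $u(z)>0$ for all $z\in\Omega$ and $\partial u/\partial n|_{\partial\Omega}<0$, which is precisely membership in ${\rm int}\,C_+$. The main obstacle throughout is the boundary analysis: the singular term $u^{-\gamma}$ prevents a direct appeal to standard Schauder estimates up to $\partial\Omega$, so the core of the argument is the careful use of the barrier $v_\eta$ and the equivalence \eqref{eq3} to show the correct decay rate $\hat d$ of $u$ near the boundary, which is exactly what both the regularity and the interior-cone conclusions hinge upon.
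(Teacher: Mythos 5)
Your argument for $\mathcal{L}\neq\emptyset$ is essentially the paper's: truncate the reaction at the ordered pair $v_\eta\le\hat u_\lambda$, minimize the resulting coercive and sequentially weakly lower semicontinuous functional, and squeeze the minimizer into $[v_\eta,\hat u_\lambda]$ so that the truncation is inactive; the lower barrier then yields $u_\lambda^{-\gamma}\le v_\eta^{-\gamma}\in L^1(\Omega)$ and $c\hat d\le v_\eta\le u_\lambda$, i.e., requirements (a) and (b) of the definition of a solution. One point you should make explicit: the lower comparison $v_\eta\le u_\lambda$ is not a consequence of \eqref{eq13} alone --- it requires $\lambda f(z,v_\eta(z))\ge\eta\hat u_1(z)$ for almost all $z\in\Omega$ (the analogue of \eqref{eq10} with $\hat u_\lambda$ replaced by $v_\eta$, namely \eqref{eq17}), which is why the paper takes $\lambda\ge\lambda_0$ ``even bigger if necessary'' before testing with $(v_\eta-u_\lambda)^+$. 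With that proviso your first half is sound.

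The regularity half has a genuine gap. Everything hinges on obtaining $C^1$ regularity up to $\partial\Omega$, and your plan for that step --- interior estimates plus ``inserting $u$ between $v_\eta$-type singular barriers'' and a citation of \cite{11} --- does not actually produce it. The quantitative issue is this: the only global bound your argument supplies on the singular term is $u^{-\gamma}\le c_u^{-\gamma}\hat d^{-\gamma}$, and $\hat d^{-\gamma}\in L^s(\Omega)$ only for $s<1/\gamma$; since $\gamma$ may well exceed $1/N$, this never reaches the range $s>N$ needed for $W^{2,s}(\Omega)\hookrightarrow C^{1,\alpha}(\overline\Omega)$. The paper closes exactly this gap with a device you are missing: since $u$ dominates a multiple of $\hat d$, Proposition \ref{prop1} applied in $C_0(\overline{\Omega})$ with $e=u$ and the comparison function $\hat u_1^{1/s}$ (not $\hat u_1$) gives $\hat u_1^{1/s}\le c_6 u$, hence $u^{-\gamma}\le c_7\hat u_1^{-\gamma/s}$; the Lazer--McKenna lemma (using $0<\gamma<1$) gives $\hat u_1^{-\gamma}\in L^1(\Omega)$, that is, $\hat u_1^{-\gamma/s}\in L^s(\Omega)$ for every $s>N$. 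Only then do the Calder\'on--Zygmund estimates (Theorem 9.15 of \cite{6}) yield $u\in W^{2,s}(\Omega)\subseteq C^{1,\alpha}(\overline{\Omega})$, after which the lower bound $c_u\hat d\le u$ upgrades membership to ${\rm int}\,C_+$. A smaller slip: interior regularity gives $W^{2,s}_{\rm loc}$, hence $C^{1,\alpha}_{\rm loc}$, not $C^2$, since $f$ is only Carath\'eodory in $z$.
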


\begin{proof}
	Let $\lambda \geq \lambda_0$. Using \eqref{eq13} we can introduce the Carath\'eodory function $g_\lambda(z,x)$ defined by
\begin{equation}
 g_\lambda(z,x) = \left\{
	 \begin{array}{ll}
			\lambda f(z,v_\eta(z)) - v_\eta(z)^{-\gamma}&\mbox{if}\ x < v_\eta(z)\\
			\lambda f(z,x) - x^{-\gamma} &\mbox{if}\ v_\eta(z)\leq x \leq \hat{u}_\lambda(z) \\
			\lambda f(z,\hat{u}_\lambda(z)) - \hat{u}_\lambda(z)^{-\gamma}&\mbox{if}\ \hat{u}_\lambda(z) < x.
		\end{array}
	\right.
	\label{eq14}
\end{equation}

We set $G_\lambda(z,x)= \int_{0}^x g_\lambda(z,s)ds$ and consider the functional $\varphi_\lambda: H_0^1(\Omega)\rightarrow\RR$ defined by
$$ \varphi_\lambda(u) = \frac{1}{2}||Du||_2^2 - \int_{\Omega} G_\lambda(z,u)dz\ \mbox{for all}\ u\in H_0^1(\Omega). $$

From Papageorgiou \& R\u{a}dulescu \cite{11} (see Claim 1 in the proof of Proposition \ref{prop6}), we have that $\varphi_\lambda \in C^1(H_0^1(\Omega))$. It is clear from \eqref{eq14} that $\varphi_\lambda(\cdot)$ is coercive. Also, it is sequentially weakly lower semicontinuous. Therefore we can find $u_\lambda\in H_0^1(\Omega)$ such that
\begin{equation}
	\begin{array}{ll}
		&\varphi_\lambda(u_\lambda) = \inf \left\{\varphi_\lambda(u):u\in H_0^1(\Omega)\right\},\\
	 	\Rightarrow & \varphi_\lambda^{'}(u_\lambda)=0,\\
	 	\Rightarrow & \int_{\Omega}(Du_\lambda,Dh)_{\RR^{N}}dz = \int_{\Omega}g_\lambda(z,u_\lambda)hdz\ \mbox{for all}\ h\in H_0^1(\Omega).
	\end{array}
	\label{eq15}
\end{equation}

In \eqref{eq15} first we choose $h=(u_\lambda -\hat{u}_\lambda)^+\in H_0^1(\Omega)$. We have
$$
\begin{array}{ll}
	\int_{\Omega} (Du_\lambda,D(u_\lambda - \hat{u}_\lambda)^+)_{\RR^{N}}dz &= \int_{\Omega} [\lambda f(z,\hat{u}_\lambda)-\hat{u}_\lambda^{-\gamma}](u_\lambda-\hat{u}_\lambda)^+ dz \ \mbox{(see \eqref{eq14}})\\
	&\leq \int_{\Omega} \lambda f(z,\hat{u}_\lambda)(u_\lambda - \hat{u}_\lambda)^+ dz\\
	&= \int_{\Omega} (D\hat{u}_\lambda,D(u_\lambda-\hat{u}_\lambda)^+)_{\RR^{N}}dz \ \mbox{(see Proposition \ref{prop2}}),\\
	\Rightarrow ||D(u_\lambda-\hat{u}_\lambda)^+||_2^2 \leq 0,\\
	\Rightarrow u_\lambda \leq \hat{u}_\lambda.
\end{array}
$$

Next, in \eqref{eq15} we choose $h=(v_\eta-u_\lambda)^+\in H_0^1(\Omega).$ Then we have
\begin{equation}
	\int_{\Omega} (D u_\lambda,D(v_\eta- u_\lambda)^+)_{\RR^{N}} dz = \int_{\Omega}[\lambda f(z,v_\eta)- v_\eta^{-\gamma}] (v_\eta-u_\lambda)^+ dz.
	\label{eq16}	
\end{equation}

As we proved \eqref{eq10}, using \eqref{eq8}, \eqref{eq9}, we see that by taking $\lambda \geq \lambda_0$ even bigger if necessary, we can have
\begin{equation}
	\lambda f(z,v_\eta(z)) \geq \eta\hat{u}_1(z)\ \mbox{for almost all}\ z\in\Omega.
	\label{eq17}	
\end{equation}

Hence from \eqref{eq16} and \eqref{eq17} we have
$$
\begin{array}{ll}
	\int_{\Omega}(Du_\lambda,D(v_\eta-u_\lambda)^+)_{\RR^{N}} dz &\geq \int_{\Omega} [\eta\hat{u}_1 -v_\eta^{-\gamma}](v_\eta-u_\lambda)^+ dz\\
	&= 	\int_{\Omega}(Dv_\eta,D(v_\eta-u_\lambda)^+)_{\RR^{N}} dz\\
	\Rightarrow ||D(v_\eta-u_\lambda)^+||_2^2 \leq 0,\\
	\Rightarrow v_\eta \leq u_\lambda.
\end{array}
$$

So, we have proved that
\begin{equation}
	u_\lambda\in[v_\eta,\hat{u}_\lambda].
	\label{eq18}	
\end{equation}

It follows from \eqref{eq14}, \eqref{eq15} and \eqref{eq18} that
$$ \int_{\Omega}(Du_\lambda,Dh)_{\RR^{N}}dz + \int_{\Omega}u_\lambda^{-\gamma}hdz = \int_{\Omega}f(z,u_\lambda)hdz\ \mbox{for all}\ h\in H_0^1(\Omega). $$

Recall that
$$
\begin{array}{ll}
	&c_\eta \hat{d} \leq v_\eta \leq u_\lambda \\
\mbox{and}\ &u_\lambda^{-\gamma}\leq v_\eta^{-\gamma}\in L^1(\Omega)\ \mbox{(see \eqref{eq18})}.
\end{array}
$$

Therefore $u_\lambda$ is a solution of \eqref{eqp}. We have proved that for $\lambda \geq \lambda_0$ big enough, we have $\lambda\in \mathcal{L}$ and so $\mathcal{L}\neq \emptyset$.

Now let $u\in S_\lambda$. Then by definition we have
\begin{equation}
	\begin{array}{ll}
		&\hat{c}_u\hat{u}_1\leq u\ \mbox{for some}\ \hat{c}_u>0,\\
		\Rightarrow &u\in{\rm int}\,K_+.
	\end{array}
	\label{eq19}	
\end{equation}

Let $s>N$. Since $\hat{u}_1^{1/s}\in K_+$, we can find $c_6>0$ such that
$$
\begin{array}{ll}
	&{\hat{u}_1}^{1/s} \leq c_6 u\ \mbox{(see Proposition \ref{prop1})},\\
	\Rightarrow &u^{-\gamma} \leq c_7{\hat{u}_1}^{-\frac{\gamma}{s}}\ \mbox{for some}\ c_7>0.	
\end{array}
$$

However, by  Lemma in Lazer \& McKenna \cite{8}, we have that $\hat{u}_1^{-\frac{\gamma}{s}}\in L^s(\Omega)$ (recall that $0<\gamma<1$). So, it follows that $u^{-\gamma}\in L^s(\Omega).$ Then Theorem 9.15 of Gilbarg \& Trudinger \cite[p. 241]{6} implies that $u\in W^{2,s}(\Omega)$. Since $s>N$, from the Sobolev embedding theorem, we have $u\in C^{1,\alpha}(\overline{\Omega})$ with $\alpha = 1 -\frac{N}{s}.$ We conclude that $u\in{\rm int}\,C_+$ (see \eqref{eq19}) and so $S_\lambda \subseteq{\rm int}\,C_+$.
\end{proof}

Next, we prove a structural property for the set $\mathcal{L}$ and a kind of monotonicity property for the set $S_\lambda$ with respect to $\lambda\in\mathcal{L}$.

\begin{prop}\label{prop4}
	If hypotheses $H(f)$ hold, $\lambda\in\mathcal{L},\ \mu>\lambda$, and $u_\lambda\in S_\lambda \subseteq{\rm int}\,C_+,$ then $\mu\in\mathcal{L}$ and we can find $u_\mu\in S_\mu\subseteq{\rm int}\,C_+$.	
\end{prop}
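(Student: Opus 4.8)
The plan is to place $\mu$ in $\mathcal{L}$ by the sub--supersolution method on an order interval, exactly paralleling the proof of Proposition \ref{prop3}, with the given solution $u_\lambda$ now playing the role that $v_\eta$ played there. The subsolution is immediate: since $f(z,\cdot)\ge 0$ on $\RR_+$ (see \eqref{eq1}) and $\mu>\lambda$, the identity $-\Delta u_\lambda + u_\lambda^{-\gamma}=\lambda f(z,u_\lambda)\le \mu f(z,u_\lambda)$ shows that $u_\lambda$ is a (strict) subsolution of $(P_\mu)$; moreover $u_\lambda\in{\rm int}\,C_+$, so $u_\lambda\ge c\hat{d}$ and the singular term stays in $L^1$ along $[u_\lambda)$, exactly as in the discussion preceding Proposition \ref{prop3}.

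The decisive step is to produce a supersolution $\bar{u}_\mu$ of $(P_\mu)$ with $u_\lambda\le\bar{u}_\mu$. Since any solution of $(Q_\mu)$ satisfies $-\Delta\bar{u}_\mu+\bar{u}_\mu^{-\gamma}\ge -\Delta\bar{u}_\mu=\mu f(z,\bar{u}_\mu)$, it is automatically a supersolution of $(P_\mu)$; so I would construct a solution of $(Q_\mu)$ lying above $u_\lambda$. My approach is to run the mountain-pass argument of Proposition \ref{prop2} for the reaction $\mu f$ \emph{truncated from below at} $u_\lambda$: the C-condition is inherited from $H(f)(ii),(iii)$ as in \eqref{eq4}, and testing the resulting Euler equation with $(u_\lambda-\bar{u}_\mu)^+$, together with the fact that $u_\lambda$ is a subsolution of $(Q_\mu)$, forces $\bar{u}_\mu\ge u_\lambda$. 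I expect this to be the main obstacle. Truncating below the strictly positive function $u_\lambda$ replaces the reaction by the positive function $\mu f(z,u_\lambda)$ on $\{x\le u_\lambda\}$, so the origin is no longer a local minimizer (indeed no longer a critical point) and the mountain-pass geometry of Proposition \ref{prop2} is lost; at the same time the superlinearity of $f$ (see $H(f)(ii)$) rules out the cheap supersolutions one might hope for, since large multiples $t\hat{u}_1$, large constants, and sums of $u_\lambda$ with a solution of $(Q_\mu)$ all fail the supersolution inequality. Re-establishing a minimax geometry for the below-truncated functional --- for instance by anchoring it at a local minimizer found on $[u_\lambda)$ rather than at $0$, or by a suitable constrained minimization --- is where the real work lies.

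Granting an ordered pair $u_\lambda\le\bar{u}_\mu$, the conclusion follows the template of Proposition \ref{prop3} with no new ideas. I would introduce the Carath\'eodory truncation $g_\mu(z,\cdot)$ equal to $\mu f(z,u_\lambda)-u_\lambda^{-\gamma}$ for $x<u_\lambda$, to $\mu f(z,x)-x^{-\gamma}$ for $u_\lambda\le x\le\bar{u}_\mu$, and to $\mu f(z,\bar{u}_\mu)-\bar{u}_\mu^{-\gamma}$ for $x>\bar{u}_\mu$, and minimize the associated functional $\varphi_\mu(u)=\frac{1}{2}\|Du\|_2^2-\int_\Omega G_\mu(z,u)\,dz$. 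As in Proposition \ref{prop3}, $\varphi_\mu\in C^1(H_0^1(\Omega))$ (the singular part being controlled by the Hardy and Lazer--McKenna estimates), and $\varphi_\mu$ is coercive and sequentially weakly lower semicontinuous because of the two-sided truncation; hence it attains a global minimum at some $u_\mu$. Testing $\varphi_\mu'(u_\mu)=0$ with $(u_\mu-\bar{u}_\mu)^+$ and with $(u_\lambda-u_\mu)^+$ yields $u_\mu\in[u_\lambda,\bar{u}_\mu]$, so the truncation is inactive and $u_\mu$ solves $(P_\mu)$, giving $\mu\in\mathcal{L}$. Finally $u_\mu\in S_\mu\subseteq{\rm int}\,C_+$ is exactly the regularity statement of Proposition \ref{prop3} (bootstrap via $u_\mu^{-\gamma}\in L^s(\Omega)$, Theorem 9.15 of Gilbarg--Trudinger, and the Sobolev embedding), which completes the argument.
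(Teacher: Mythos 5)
Your overall framework is the paper's: truncate the reaction to an order interval $[u_\lambda,\bar u_\mu]$, minimize the resulting coercive, weakly lower semicontinuous functional, and localize the minimizer by testing with $(u_\mu-\bar u_\mu)^+$ and $(u_\lambda-u_\mu)^+$; that part of your argument coincides with the paper and is fine. The genuine gap is exactly the one you flag yourself: you never actually produce a supersolution $\bar u_\mu\geq u_\lambda$. The route you sketch (a mountain-pass argument for $\mu f$ truncated from below at $u_\lambda$) is abandoned with the admission that the minimax geometry is lost, so the proof is incomplete precisely at its decisive step.

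The paper's resolution avoids any new variational argument and is worth internalizing. One does not need a solution of \eqref{eqq} with parameter $\mu$ lying above $u_\lambda$: since $f\geq 0$, the solution $\hat u_\vartheta\in{\rm int}\,C_+$ of $(Q_\vartheta)$ already supplied by Proposition \ref{prop2} is a supersolution of \eqref{eqp} with parameter $\mu$ for \emph{any} $\vartheta\geq\mu$, because $-\Delta\hat u_\vartheta+\hat u_\vartheta^{-\gamma}\geq\vartheta f(z,\hat u_\vartheta)\geq\mu f(z,\hat u_\vartheta)$. The ordering $\hat u_\vartheta\geq u_\lambda$ is then forced not variationally but by the weak comparison principle, after taking $\vartheta$ large enough: with $\rho=||u_\lambda||_\infty$ one has $0\leq f(z,x)\leq c_\rho x$ on $[0,\rho]$, hence $\lambda f(z,u_\lambda)\leq\lambda c_\rho u_\lambda$, while \eqref{eq8} together with the lower bound $\hat u_\vartheta\geq v_\eta$ (uniform in $\vartheta\geq\lambda_0$ by \eqref{eq13}) gives $\vartheta f(z,\hat u_\vartheta)\geq\vartheta c_2\min\{\hat u_\vartheta,\hat u_\vartheta^{q-1}\}\geq\lambda c_\rho u_\lambda$ once $\vartheta$ is sufficiently big; therefore $-\Delta\hat u_\vartheta\geq\lambda f(z,u_\lambda)=-\Delta u_\lambda+u_\lambda^{-\gamma}\geq-\Delta u_\lambda$, and Tolksdorf's lemma yields $\hat u_\vartheta\geq u_\lambda$. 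With $\bar u_\mu=\hat u_\vartheta$ your concluding paragraph goes through essentially verbatim. Note also that the paper's proof extracts, via $H(f)(iv)$ and Hopf's maximum principle, the strict ordering $u_\mu-u_\lambda\in{\rm int}\,C_+$; this is not in the statement you were asked to prove, but it is what Proposition \ref{prop6} actually uses, so proving only the literal statement would leave work for later.
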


\begin{proof}
	Let $\rho=||u_\lambda||_\infty.$ Hypotheses $H(f)$ imply that we can find $c_\rho>0$ such that
	\begin{equation}
		0\leq f(z,x)\leq c_\rho x\ \mbox{for almost all}\ z\in\Omega\ \mbox{and all}\ 0\leq x\leq\rho.
		\label{eq20}
	\end{equation}
	
	Also from \eqref{eq8} we know that
	\begin{equation}
		f(z,x)\geq c_2 \min\{x,x^{q-1}\}\ \mbox{for almost all}\ z\in\Omega\ \mbox{and all}\ x\geq0.
		\label{eq21}	
	\end{equation}

	Recall that for $\vartheta \geq \lambda_0$ we have $\hat{u}_\theta \geq v_\eta$ (see \eqref{eq13}) and $v_\eta\in{\rm int}\,K_+$. So, for $\vartheta \geq \lambda_0$ big enough we have
	\begin{equation}
		\vartheta c_2\min \{ \hat{u}_\theta,\hat{u}_\theta^{q-1}\} \geq \lambda c_\rho u_\lambda.
		\label{eq22}	
	\end{equation}

	It follows that
	$$
	\begin{array}{ll}
		-\Delta\hat{u}_\theta = \vartheta f(z,\hat{u}_\theta) &\geq \vartheta c_2\min \{\hat{u}_\theta,\hat{u}_\theta^{q-1}\} \ \mbox{(see \eqref{eq21})}\\
		&\geq \lambda c_\rho u_\lambda \ \mbox{(see \eqref{eq22})}\\
		&\geq \lambda f(z,u_\lambda) \ \mbox{(see \eqref{eq20})}\\
		&= -\Delta u_\lambda +u_\lambda^{-\gamma}\ \mbox{(since}\ u_\lambda \in S_\lambda)\\
		&\geq -\Delta u_\lambda \ \mbox{for almost all}\ z\in\Omega,\\
	\end{array}
	$$
	$$ \Rightarrow \hat{u}_\theta \geq u_\lambda\ \mbox{(by the weak comparison principle, see Tolksdorf \cite{14})}. $$
	
	Therefore we can introduce the Carath\'eodory function $k_\mu(z,x)$ defined by
\begin{equation}
		k_\mu(z,x) = \left\{
			\begin{array}{ll}
				\mu f(z,u_\lambda(z)) - u_\lambda(z)^{-\gamma}&\mbox{if}\ x<u_\lambda(z)\\
				\mu f(z,x)-x^{-\gamma}&\mbox{if}\ u_\lambda(z)\leq x\leq \hat{u}_\theta(z)\\
				\mu f(z,\hat{u}_\theta(z)) -\hat{u}_\theta(z)^{-\gamma}&\mbox{if}\ \hat{u}_\theta(z)<x.
			\end{array}
		\right.
	\label{eq23}
\end{equation}
	
	We set $K_\mu(z,x)=\int_{0}^{x} k_\mu(z,s)ds$ and consider the functional $\sigma_\mu :H_0^1(\Omega)\rightarrow\RR$ defined by
	$$ \sigma_\mu(u) = \frac{1}{2}||Du||_2^2 - \int_\Omega k_\mu(z,u)dz \ \mbox{for all}\ u\in H_0^1(\Omega). $$
	
	Again we have $\sigma_\mu \in C^1 (H_0^1(\Omega))$ (see Papageorgiou \& R\u{a}dulescu \cite{11}). From (\ref{eq23}) it is clear that $\sigma_\mu(\cdot)$ is coercive. Also, by the Sobolev embedding theorem we see that $\sigma_\mu(\cdot)$ is sequentially weakly lower semicontinuous. So, by the Weierstrass-Tonelli theorem, we can find $u_\mu\in H_0^1(\Omega)$ such that
	$$
	\begin{array}{ll}
		&\sigma_\mu(u_\mu) = \inf \left\{ \sigma_\mu(u): u\in H_0^1(\Omega)\right\},\\
		\Rightarrow &\sigma_\mu'(u_\mu)=0,\\
		\Rightarrow &\int_{\Omega}(Du_\lambda,Dh)_{\RR^{N}} dz = \int_{\Omega}k_\mu(z,u_\lambda)hdz \ \mbox{for all}\ h\in H_0^1(\Omega).
	\end{array}
	$$
	
	Choosing first $h=(u_\mu-\hat{u}_{\theta})^+ \in H_0^1(\Omega)$ and then $h=(u_\lambda-u_\mu)^+ \in H_0^1(\Omega)$ as in the proof of Proposition \ref{prop3}, we can show that
	\begin{equation}
		\begin{array}{ll}
		&u_\mu \in [u_\lambda,\hat{u}_\theta],\\
		\Rightarrow & u_\mu\in S_\mu \subseteq{\rm int}\,C_+ \ \mbox{(see \eqref{eq23})}.
		\end{array}
		\label{eq24}
	\end{equation}
	
	Let $\rho=||\hat{u}_\theta||_\infty$ and let $\hat{\xi}_0 = \max \{\hat{\xi}_\rho^\lambda,\hat{\xi}_\rho^\mu$\} (see hypothesis $H(f)(iv))$. We have
	$$
	\begin{array}{ll}
		-\Delta u_\lambda +\hat{\xi}_0 u_\lambda &= \lambda f(z,u_\lambda)+ \hat{\xi}_0 u_\lambda -u_\lambda^{-\gamma}\\
		&\leq \mu f(z,u_\mu) + \hat{\xi}_0 u_\mu -u_\mu^{-\gamma}\\
		&\mbox{(see hypothesis }\ H(f)(iv)\ \mbox{and \eqref{eq24})}\\
		&=-\Delta u_\mu + \hat{\xi}_0 u_\mu \ \mbox{(since}\ u_\mu\in S_\mu),\\
		\end{array}
	$$
	$$
	\begin{array}{ll}
		\Rightarrow \Delta(u_\mu-u_\lambda) \leq \hat{\xi}_0 (u_\mu- u_\lambda),\\
		\Rightarrow u_\mu - u_\lambda \in{\rm int}\,C_+ \ \mbox{(by Hopf's maximum principle)}.
	\end{array}
 	$$
	The proof is now complete.
\end{proof}

This proposition implies that $\mathcal{L}$ is a half-line. More precisely, let $\lambda_*=inf \mathcal{L}$. We have
\begin{equation}
	(\lambda_*,+\infty)\subseteq \mathcal{L} \subseteq [\lambda_*,+\infty).
	\label{eq25}	
\end{equation}

\begin{prop}\label{prop5}
	If hypotheses $H(f)$ hold, then $\lambda_*>0.$	
\end{prop}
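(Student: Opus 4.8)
The plan is to argue by contradiction, extracting from the Diaz--Morel--Oswald necessary condition a positive lower bound for every admissible parameter. The starting observation is that for any $\lambda\in\mathcal L$ and any $u\in S_\lambda$, the test function $h=\hat u_1\in{\rm int}\,C_+\subseteq H_0^1(\Omega)$ is admissible in (c): the integral $\int_\Omega u^{-\gamma}\hat u_1\,dz$ is finite because $u\geq c_u\hat d$ and $\hat u_1\leq\hat c_2\hat d$, so $u^{-\gamma}\hat u_1\leq c\,\hat d^{\,1-\gamma}\in L^1(\Omega)$, exactly as in the computation preceding the definition of solution. Since $\hat u_1$ is the principal eigenfunction, $\int_\Omega(Du,D\hat u_1)_{\RR^N}dz=\hat\lambda_1\int_\Omega u\hat u_1\,dz$, so testing (c) with $\hat u_1$ gives
\[
\hat\lambda_1\int_\Omega u\hat u_1\,dz+\int_\Omega u^{-\gamma}\hat u_1\,dz=\lambda\int_\Omega f(z,u)\hat u_1\,dz .
\]

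Next I would exploit the elementary fact that the scalar map $t\mapsto\hat\lambda_1 t+t^{-\gamma}$ attains on $(0,\infty)$ a strictly positive minimum $m_0=m_0(\hat\lambda_1,\gamma)>0$ (at $t=(\gamma/\hat\lambda_1)^{1/(\gamma+1)}$). Since $\hat u_1\geq 0$, applying this pointwise to $t=u(z)$ and integrating yields
\[
\lambda\int_\Omega f(z,u)\hat u_1\,dz=\int_\Omega\big(\hat\lambda_1 u+u^{-\gamma}\big)\hat u_1\,dz\geq m_0\int_\Omega\hat u_1\,dz=:\xi_*>0 .
\]
This is the necessary condition inherited from \cite{3}: along any solution the weighted integral $\int_\Omega f(z,u)\hat u_1\,dz$ cannot fall below $\xi_*/\lambda$.

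Now suppose, for contradiction, that $\lambda_*=0$. Then $(0,\infty)\subseteq\mathcal L$ and I can pick $\lambda_n\downarrow 0$. The decisive point, where the monotone structure enters, is to select solutions $u_n\in S_{\lambda_n}$ that stay below a fixed function. Taking the minimal positive solutions $u_n=u^*_{\lambda_n}$ and using that $\lambda\mapsto u^*_\lambda$ is nondecreasing (the comparison made possible by $H(f)(iv)$, in the spirit of the last step of the proof of Proposition \ref{prop4}, where $u_\mu-u_\lambda\in{\rm int}\,C_+$ was obtained), I get $u_n\leq u^*_{\lambda_1}$ for all $n$, hence the uniform bound $\|u_n\|_\infty\leq\|u^*_{\lambda_1}\|_\infty=:M_0$. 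By $H(f)(i)$ this forces $f(z,u_n)\leq a(z)(1+M_0^{r-1})$ for a.a. $z\in\Omega$, whence $\int_\Omega f(z,u_n)\hat u_1\,dz\leq C_1$ uniformly in $n$. Combined with the necessary condition this gives $\xi_*/\lambda_n\leq\int_\Omega f(z,u_n)\hat u_1\,dz\leq C_1$, i.e. $\lambda_n\geq\xi_*/C_1>0$, contradicting $\lambda_n\to 0$. Therefore $\lambda_*\geq\xi_*/C_1>0$.

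The step I expect to be the main obstacle is precisely this uniform $L^\infty$ control, i.e. confining the chosen solutions to one fixed order interval. The necessary condition by itself only says $\int_\Omega f(z,u_n)\hat u_1\,dz\to\infty$, which is perfectly consistent with the solutions blowing up as $\lambda_n\to 0$; moreover the superlinearity of $f$ (hypothesis $H(f)(ii)$) rules out any pointwise inequality $\lambda f(z,t)\leq\hat\lambda_1 t+t^{-\gamma}$ for large $t$, so no direct pointwise comparison can close the gap. The contradiction can be reached only after pinning the solutions into a bounded interval, which is why the monotone dependence of the minimal solutions on $\lambda$ (hence $H(f)(iv)$ together with the weak comparison principle of Tolksdorf \cite{14}) is the essential ingredient. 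Equivalently, one observes that the decreasing limit of $\{u_n\}$ would have to solve the parameter-free problem $-\Delta u+u^{-\gamma}=0$, which admits no positive solution, while $u_n\to 0$ is impossible because $\int_\Omega u_n^{-\gamma}\hat u_1\,dz\to+\infty$ by Fatou's lemma whereas $\lambda_n\int_\Omega f(z,u_n)\hat u_1\,dz$ stays bounded.
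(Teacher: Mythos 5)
Your first step is correct and is in fact a clean, self-contained substitute for the citation of Diaz--Morel--Oswald: testing (c) with $h=\hat{u}_1$ (admissible, as you verify) and using $\min_{t>0}\left(\hat{\lambda}_1 t+t^{-\gamma}\right)=m_0>0$ gives $\lambda\int_\Omega f(z,u)\hat{u}_1\,dz\geq m_0\int_\Omega\hat{u}_1\,dz=\xi_*>0$ for every $\lambda\in\mathcal{L}$ and every $u\in S_\lambda$. The paper does not derive this inequality; it works instead with the auxiliary problem $(Au)_\eta$ and quotes Theorem 1 of \cite{3} for both the existence and the nonexistence halves. Up to this point your argument is sound.

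The gap is exactly where you flagged it, and it is not closed. Your uniform bound $\|u_n\|_\infty\leq M_0$ rests on two claims that are established neither in your proposal nor in the paper: that each $S_\lambda$ possesses a minimal element $u^*_\lambda$, and that $\lambda\mapsto u^*_\lambda$ is nondecreasing. Proposition \ref{prop4} cannot supply the second claim: it is an existence statement in the opposite direction --- given $u_\lambda\in S_\lambda$ and $\mu>\lambda$ it \emph{constructs} one particular $u_\mu\in S_\mu$ with $u_\mu\geq u_\lambda$; it says nothing about solutions at the \emph{smaller} parameter being dominated by a fixed solution at the larger one, and obtaining such a statement would itself require producing a solution at $\lambda_n$ inside an order interval with a suitable lower barrier (delicate here because of the singular term). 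Without the uniform bound, your necessary condition only yields $\int_\Omega f(z,u_n)\hat{u}_1\,dz\geq\xi_*/\lambda_n\to+\infty$, i.e.\ the solutions must become large --- no contradiction. (Your closing alternative suffers from the same defect: both ``the decreasing limit of $\{u_n\}$'' and ``$\lambda_n\int_\Omega f(z,u_n)\hat{u}_1\,dz$ stays bounded'' presuppose the bound.) The paper closes this step differently: it takes the upper bound $u_n\leq\hat{u}_\vartheta$ from the comparison, carried out in the proof of Proposition \ref{prop4} via Tolksdorf's weak comparison principle \cite{14}, with the solution $\hat{u}_\vartheta$ of the non-singular problem $(Q_\vartheta)$, then uses the linear bound $f(z,x)\leq c_\rho x$ on $[0,\rho]$ to get $\lambda_n f(z,u_n)\leq\lambda_n c_\rho\hat{u}_\vartheta\leq\eta\hat{u}_1$ for $n$ large, and concludes from Theorem 1 of \cite{3} that $(Au)_\eta$ would be solvable for every $\eta>0$, contradicting the nonexistence for small $\eta$. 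If you replace your minimal-solution step by this comparison with $\hat{u}_\vartheta$, your eigenfunction computation does finish the proof; as written, the argument is incomplete.
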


\begin{proof}
	Arguing by contradiction, suppose that $\lambda_*=0$. Let $\left\{\lambda_n\right\}_{n\geq 1} \subseteq \mathcal{L}$ such that $\lambda_n \downarrow 0$ and let $u_n \in S_{\lambda_n} \subseteq{\rm int}\,C_+$ for all $n\in\NN$. We know that
	\begin{equation}
		\begin{array}{ll}
			0\leq u_n\leq \hat{u}_\theta \ \mbox{for}\ \vartheta\geq\lambda_0\ \mbox{big enough, for all}\ n\in\NN  \\
			\mbox{(see the proof of Proposition \ref{prop4})},
		\end{array}
		\label{eq26}	
	\end{equation}
	\begin{equation}
		\begin{array}{ll}
			-\Delta u_n +u_n^{-\gamma} = \lambda_n f(z,u_n) \ \mbox{for almost all}\ z\in\Omega\ \mbox{and all}\ n\in\NN.
		\end{array}
		\label{eq27}	
	\end{equation}
	
	Let $\eta>0$. With $\rho =||\hat{u}_\theta||_\infty$ (see \eqref{eq26}), we have
	\begin{equation}
		\begin{array}{ll}
			-\Delta u_n + u_n^{-\gamma} &= \lambda_n f(z,u_n)\\
			&\leq \lambda_n c_\rho u_n \ \mbox{(see \eqref{eq20})}\\
			&\leq \lambda_n c_\rho \hat{u}_\theta \ \mbox{(see \eqref{eq26})}\\
			&\leq \eta\hat{u}_1\ \mbox{for all}\ n\geq n_0 \ \mbox{(recall that }\ \hat{u}_1\in{\rm int}\,C_+).
		\end{array}
		\label{eq28}	
	\end{equation}

By \eqref{eq28} and Theorem 1(i) of Diaz, Morel \& Oswald \cite{3} it follows that problem \ref{eqAu}  has a positive solution. Since $\eta>0$ is arbitrary, we contradict Theorem 1(ii) of Diaz, Morel \& Oswald \cite{3}. This proves that $\lambda_*>0$.
\end{proof}

\begin{prop}\label{prop6}
	If hypotheses $H(f)$ hold and $\lambda_*<\lambda$, then problem \eqref{eqp} has at least two positive solutions $u_0,\,\hat{u}\in{\rm int}\,C_+,\, u_0 \neq \hat{u}$.	
\end{prop}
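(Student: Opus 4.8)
The plan is to produce the two solutions in the standard way for such problems: a first solution $u_0$ as a local minimizer coming from the sub--supersolution machinery already in place, and a second one $\hat u$ via the mountain pass theorem, the two being separated by their energy levels. First I would fix $\mu$ with $\lambda_*<\mu<\lambda$; by \eqref{eq25} we have $\mu\in\mathcal{L}$, so we may pick $u_\mu\in S_\mu\subseteq{\rm int}\,C_+$. Applying Proposition \ref{prop4} with the roles of the parameters exchanged (here $\mu<\lambda$), I obtain $u_0\in S_\lambda\subseteq{\rm int}\,C_+$ together with $u_0-u_\mu\in{\rm int}\,C_+$, where $u_0$ is the global minimizer of the coercive truncated functional $\sigma_\lambda$ associated as in \eqref{eq23} to the order interval $[u_\mu,\hat u_\theta]$ with $\theta\geq\lambda_0$ large. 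Choosing $\theta$ larger if necessary and arguing by the weak comparison principle and Hopf's theorem exactly as at the end of the proof of Proposition \ref{prop4}, I would also get $\hat u_\theta-u_0\in{\rm int}\,C_+$, so that $u_0\in{\rm int}_{C_0^1(\overline{\Omega})}[u_\mu,\hat u_\theta]$. This $u_0$ will be the first positive solution.

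Next I would set up the functional adapted to searching for a second solution lying above $u_0$. Freezing the reaction below $u_\mu$ and keeping the genuine singular superlinear nonlinearity above it, define the Carath\'eodory function $\hat e_\lambda(z,x)$ equal to $\lambda f(z,u_\mu(z))-u_\mu(z)^{-\gamma}$ for $x\leq u_\mu(z)$ and to $\lambda f(z,x)-x^{-\gamma}$ for $x>u_\mu(z)$, and let $\hat\varphi_\lambda(u)=\tfrac12\|Du\|_2^2-\int_\Omega\hat E_\lambda(z,u)\,dz$ with $\hat E_\lambda(z,x)=\int_0^x\hat e_\lambda(z,s)\,ds$. Since $u_\mu\geq c_\mu\hat d$, the bound $u_\mu^{-\gamma}\le c\,\hat d^{-\gamma}$ tames the singularity and $\hat\varphi_\lambda\in C^1(H_0^1(\Omega))$ by Papageorgiou \& R\u{a}dulescu \cite{11}. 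On the order interval $[u_\mu,\hat u_\theta]$ the integrand $\hat E_\lambda$ coincides with the one defining $\sigma_\lambda$, so the two functionals agree there. As $u_0$ minimizes $\sigma_\lambda$ over $H_0^1(\Omega)$ and $u_0\in{\rm int}_{C_0^1(\overline{\Omega})}[u_\mu,\hat u_\theta]$, every sufficiently small $C_0^1(\overline{\Omega})$-perturbation of $u_0$ remains in this interval, whence $u_0$ is a local minimizer of $\hat\varphi_\lambda$ in the $C_0^1(\overline{\Omega})$-topology. I would then invoke the equivalence of $C_0^1(\overline{\Omega})$- and $H_0^1(\Omega)$-local minimizers for this type of singular functional (again \cite{11}) to conclude that $u_0$ is a local minimizer of $\hat\varphi_\lambda$ on $H_0^1(\Omega)$.

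Finally I would run the mountain pass theorem on $\hat\varphi_\lambda$. If $u_0$ is not isolated in $K_{\hat\varphi_\lambda}$, then there are infinitely many critical points near it, each a distinct positive solution by the comparison argument below, and we are done. Otherwise $u_0$ is a strict local minimizer and there is $\rho\in(0,1)$ with $\hat\varphi_\lambda(u_0)<\inf\{\hat\varphi_\lambda(u):\|u-u_0\|=\rho\}$. Hypothesis $H(f)(ii)$ gives $\hat\varphi_\lambda(u_0+t\hat u_1)\to-\infty$ as $t\to+\infty$, since the lower truncation is inactive for large positive arguments, and $\hat\varphi_\lambda$ satisfies the C-condition by $H(f)(ii),(iii)$ exactly as in \eqref{eq4}. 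The mountain pass theorem then yields $\hat u\in K_{\hat\varphi_\lambda}$ with $\hat\varphi_\lambda(\hat u)>\hat\varphi_\lambda(u_0)$, so $\hat u\neq u_0$. To see that $\hat u$ is a genuine solution, I would test $\hat\varphi_\lambda'(\hat u)=0$ with $(u_\mu-\hat u)^+$ and use that $u_\mu$ is a strict subsolution of \eqref{eqp} (because $\mu<\lambda$ and $f\geq0$) to get $\hat u\geq u_\mu$; on $\{\hat u\geq u_\mu\}$ the truncation is inactive, so $\hat u\in S_\lambda$, and $\hat u\in{\rm int}\,C_+$ by the regularity argument of Proposition \ref{prop3}. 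Thus $u_0,\hat u\in{\rm int}\,C_+$ with $u_0\neq\hat u$ are the required two positive solutions.

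The hard part will be the local minimizer step: establishing that the truncated energy $\hat\varphi_\lambda$ is $C^1$ and that its $C_0^1(\overline{\Omega})$-local minimizers are $H_0^1(\Omega)$-local minimizers \emph{in the presence of the singular term} $u^{-\gamma}$. This is exactly where the lower truncation at $u_\mu\geq c_\mu\hat d$ is essential, since it turns the singular potential into an $L^1$-controlled perturbation, and it is the reason the whole construction is carried out on an order interval bounded below by a subsolution rather than directly on \eqref{eqp}.
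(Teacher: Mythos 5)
Your argument is correct and follows the same overall architecture as the paper's proof: trap a first solution $u_0$ in the $C_0^1(\overline{\Omega})$-interior of an order interval so that it becomes a local minimizer of the lower-truncated energy, then apply the mountain pass theorem to that energy and use the subsolution $u_\mu$ to show the mountain pass critical point lies in $S_\lambda$. Two details differ, both worth recording. First, the paper takes $\lambda_*<\sigma<\lambda<\mu$ and works on $[u_\sigma,u_\mu]$ with \emph{both} endpoints solutions of the singular problem, so that $u_\mu-u_0\in{\rm int}\,C_+$ is literally the last step of Proposition \ref{prop4}; you instead use $[u_\mu,\hat{u}_\theta]$ with the upper barrier a solution of the non-singular problem \eqref{eqq}. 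Your claim that $\hat{u}_\theta-u_0\in{\rm int}\,C_+$ follows ``exactly as at the end of the proof of Proposition \ref{prop4}'' needs one extra line, since there one compares two solutions of the singular equation while here the singular term sits only on the $u_0$-side; it enters with the favorable sign, so from $-\Delta u_0+\hat{\xi}_0 u_0=\lambda f(z,u_0)+\hat{\xi}_0 u_0-u_0^{-\gamma}\leq \theta f(z,\hat{u}_\theta)+\hat{\xi}_0\hat{u}_\theta=-\Delta\hat{u}_\theta+\hat{\xi}_0\hat{u}_\theta$ (using $H(f)(iv)$, $u_0\leq\hat{u}_\theta$ and $\theta\geq\lambda$) the strong maximum principle and Hopf's lemma give what you want. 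Second, you certify that $u_0$ is a $C_0^1(\overline{\Omega})$-local minimizer of your $\hat{\varphi}_\lambda$ by recycling the fact that this particular $u_0$ is the \emph{global} minimizer of the doubly truncated functional from Proposition \ref{prop4}; the paper instead minimizes a fresh doubly truncated functional $\hat{\beta}_\lambda$ and identifies its minimizer with $u_0$ through the finiteness assumption \eqref{eq33}. Your shortcut is slightly cleaner at that point (isolatedness of $u_0$ in the critical set is still needed, as you note, to get the mountain pass geometry). The remaining ingredients --- the $C_0^1$ versus $H_0^1$ local minimizer principle, the C-condition, the unboundedness of the energy along $t\hat{u}_1$, and the regularity giving ${\rm int}\,C_+$ --- coincide with the paper's.
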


\begin{proof}
	Let $\lambda_*<\sigma<\lambda<\mu.$ On account of Proposition \ref{prop4}, we can find $u_\sigma\in S_\sigma \subseteq{\rm int}\,C_+, u_0 \in S_\lambda \subseteq{\rm int}\, C_+\ \mbox{and}\  u_\mu\in S_\lambda \subseteq{\rm int}\,C_+$\ such that
	\begin{equation}
		\begin{array}{ll}
			&u_0 - u_\sigma \in{\rm int}\,C_+\ \mbox{and}\ u_\mu-u_0 \in{\rm int}\,C_+,\\
			\Rightarrow &u_0\in{\rm int}_{C_0^1(\overline{\Omega})} [u_\sigma,u_\mu].
		\end{array}
		\label{eq29}	
	\end{equation}

We introduce the Carath\'eodory functions $e_\lambda(z,x)$ and $\hat{e}_\lambda(z,x)$ defined by
\begin{equation}
	e_\lambda(z,x) = \left\{
		\begin{array}{ll}
			\lambda f(z,u_\sigma(z)) - u_\sigma(z)^{-\gamma}&\mbox{if}\ x\leq u_\sigma(z)\\
			\lambda f(z,x) -x^{-\gamma}&\mbox{if}\ u_\sigma(z)<x
		\end{array}
	\right.
	\label{eq30}
\end{equation}
\begin{equation}
	\mbox{and}\ \hat{e}_\lambda(z,x) = \left\{
		\begin{array}{ll}
			e_\lambda(z,x)&\mbox{if}\ x\leq u_\mu(z)\\
			e_\lambda(z,u_\mu(z))&\mbox{if}\ u_\mu(z)<x.
		\end{array}
	\right.
	\label{eq31}
\end{equation}

We set $E_\lambda(z,x)=\int_{0}^{x} e_\lambda(z,s)ds$ and $\hat{E}_\lambda(z,x) = \int_{0}^{x} \hat{e}_\lambda(z,s)ds$ and consider the $C^1$-functionals $\beta_\lambda,\hat{\beta}_\lambda : H_0^1(\Omega)\rightarrow \RR$ defined by
$$
\begin{array}{ll}
	\beta_\lambda(u) = \frac{1}{2}||Du||_2^2 - \int_{\Omega} E_\lambda(z,u)dz,\\
	\hat{\beta}_\lambda(u)= \frac{1}{2}||Du||_2^2 - \int_{\Omega}\hat{E}_\lambda(z,u)dz\ \mbox{for all}\ u\in H_0^1(\Omega).
\end{array}
$$

Using \eqref{eq30}, \eqref{eq31}, as before (see the proof of Proposition \ref{prop3}), we can check that
\begin{equation}
	K_{\beta_\lambda} \subseteq [u_\sigma) \cap{\rm int}\,C_+\ \mbox{and}\ K_{\hat{\beta}_\lambda} \subseteq [u_\sigma,u_\mu] \cap{\rm int}\,C_+.
	\label{eq32}	
\end{equation}

Using \eqref{eq32}, \eqref{eq30} and \eqref{eq29}, we see that we may assume that
\begin{equation}
	K_{\beta_\lambda}\ \mbox{is finite and}\ K_{\beta_\lambda} \cap [u_\sigma,u_\mu] = \{u_0\}.
	\label{eq33}	
\end{equation}

Otherwise, we already have additional positive solutions and so we are done.

Evidently $\hat{\beta}_\lambda(\cdot)$ is coercive (see \eqref{eq30}). Also, it is sequentially weakly lower semicontinuous. Thus we can find $\hat{u}_0 \in H_0^1 (\Omega)$ such that
\begin{equation}
	\begin{array}{ll}
		&\hat{\beta}_\lambda (\hat{u}_0) = \inf \left\{ \hat{\beta}_\lambda(u):u\in H_0^1(\Omega)\right\},\\
		\Rightarrow &\hat{u}_0 \in K_{\hat{\beta}_\lambda} \subseteq [u_\sigma,u_\mu] \cap{\rm int}\,C_+\ \mbox{(see \eqref{eq32})}.
	\end{array}
	\label{eq34}	
\end{equation}

From \eqref{eq30} and \eqref{eq31} we see that
$$
\begin{array}{ll}
	&\beta_\lambda^{'}|_{[u_\sigma,u_\mu]} = \hat{\beta}_\lambda^{'}|_{[u_\sigma,u_\mu]},\\
	\Rightarrow &\hat{u}_0 \in K_{\beta_\lambda}\cap[u_\sigma,u_\mu]\ \mbox{(see \eqref{eq34})}\\
	\Rightarrow &\hat{u}_0 = u_0\ \mbox{(see \eqref{eq33})},\\
	\Rightarrow & u_0\ \mbox{is a local}\ C_0^1(\overline{\Omega})\mbox{-minimizer of}\ \beta_\lambda(\cdot),\\
	\Rightarrow & u_0\ \mbox{is a local}\ H_0^1(\Omega)\mbox{-minimizer of}\ \beta_\lambda(\cdot)\ \mbox{(see \cite{9})}.
\end{array}
$$

Then from \eqref{eq33} and Theorem 5.7.6 of Papageorgiou, R\u{a}dulescu \& Repov\v{s} \cite[p. 367]{12}, we know that we can find $\rho\in(0,1)$ so small that
\begin{equation}
	\beta_\lambda(u_0) < \inf \left\{\beta_\lambda(u): ||u-u_0||=\rho\right\}=m_\lambda.
	\label{eq35}	
\end{equation}

Hypothesis $H(f)(ii)$ implies that
\begin{equation}
	\beta_\lambda(t\hat{u}_1) \rightarrow -\infty \ \mbox{as}\ t\rightarrow +\infty.
	\label{eq36}	
\end{equation}

Finally, recall that hypothesis $H(f)(iii)$ implies that
\begin{equation}
	\beta_\lambda(\cdot) \ \mbox{satisfies the C-condition}	
	\label{eq37}
\end{equation}
(see Papageorgiou \& R\u{a}dulescu \cite{10}).

Then \eqref{eq35}, \eqref{eq36}, \eqref{eq37} permit the use of the mountain pass theorem. So, we can find $\hat{u}\in H_0^1(\Omega)$ such that
$$
\begin{array}{ll}
	&\hat{u}\in K_{\beta_\lambda}\ \mbox{and}\ m_\lambda\leq\beta_\lambda(\hat{u}),\\	
	\Rightarrow &\hat{u}\in S_\lambda \subseteq{\rm int}\,C_+, \hat{u} \neq u_0 \ \mbox{(see \eqref{eq32}, \eqref{eq31} and \eqref{eq35})}.
\end{array}
$$
The proof is now complete.
\end{proof}

Summarizing, we can state the following theorem for the set of positive solutions of problem \eqref{eqp}.

\begin{theorem}\label{th7}
	If hypotheses $H(f)$ hold, then there exists $\lambda_*>0$ such that
	\begin{itemize}
		\item [(a)] for all $\lambda > \lambda_*$ problem \eqref{eqp} has at least two positive solutions
			$$ u_0,\hat{u}\in{\rm int}\,C_+,u_0 \neq \hat{u}; $$
		\item [(b)] for all $\lambda\in(0,\lambda_*)$ problem \eqref{eqp} has no positive solutions.
	\end{itemize}
\end{theorem}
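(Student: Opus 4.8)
The plan is to obtain Theorem \ref{th7} as the synthesis of the structural results already in hand, so that essentially no new analysis is required at this stage. First I would set $\lambda_* = \inf\mathcal{L}$. Proposition \ref{prop3} guarantees $\mathcal{L}\neq\emptyset$, so $\lambda_*$ is a well-defined real number with $\lambda_*<+\infty$, while Proposition \ref{prop5} supplies the lower bound $\lambda_*>0$. Thus the critical value announced in the statement exists and lies in $(0,+\infty)$, which already accounts for the clause ``there exists $\lambda_*>0$''.

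Next I would invoke the half-line description \eqref{eq25}, namely $(\lambda_*,+\infty)\subseteq\mathcal{L}\subseteq[\lambda_*,+\infty)$, which is the immediate consequence of the monotonicity established in Proposition \ref{prop4}: once a parameter admits a positive solution, so does every larger one. Part (b) is then read off from the right-hand inclusion: if $\lambda\in(0,\lambda_*)$, then $\lambda\notin[\lambda_*,+\infty)$, and since $\mathcal{L}\subseteq[\lambda_*,+\infty)$ this forces $\lambda\notin\mathcal{L}$; hence $S_\lambda=\emptyset$, i.e. problem \eqref{eqp} has no positive solution.

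For part (a), fix any $\lambda>\lambda_*$. Proposition \ref{prop6} applies verbatim and produces two distinct positive solutions $u_0,\hat{u}$ with $u_0\neq\hat{u}$; the inclusion $S_\lambda\subseteq{\rm int}\,C_+$ from Proposition \ref{prop3} guarantees that both belong to ${\rm int}\,C_+$, exactly as claimed. This completes the assembly, and the theorem follows.

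The honest remark is that Theorem \ref{th7} is a corollary, so the genuine obstacles were overcome earlier rather than here. The substance lies in Proposition \ref{prop5} (nonexistence for small $\lambda$, obtained by squeezing any putative solution under $\eta\hat{u}_1$ via \eqref{eq28} and contradicting the sharp existence/nonexistence dichotomy of Diaz, Morel \& Oswald for the purely singular problem \ref{eqAu}) and in Proposition \ref{prop6} (the second solution via the mountain pass theorem, which in turn rests on the truncation \eqref{eq30}, the $C^1$-versus-$H^1$ local-minimizer principle, and the C-condition verified under the weaker hypothesis $H(f)(iii)$ in place of the AR-condition). The one point that the present assembly cannot settle is the critical case $\lambda=\lambda_*$: neither inclusion in \eqref{eq25} can be sharpened to decide whether $\lambda_*\in\mathcal{L}$, which is precisely why that case is left open.
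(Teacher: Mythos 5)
Your assembly is exactly how the paper obtains Theorem \ref{th7}: the theorem is stated there as a summary, with $\lambda_*=\inf\mathcal{L}$, existence and positivity of $\lambda_*$ from Propositions \ref{prop3} and \ref{prop5}, the half-line structure \eqref{eq25} from Proposition \ref{prop4} giving part (b), and Proposition \ref{prop6} giving part (a). Your proposal is correct and follows the same route, including the accurate observation that the case $\lambda=\lambda_*$ remains open.
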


\begin{remark}
From the above Theorem is missing what happens at the critical case $\lambda=\lambda_*$. We were unable to resolve this case.
\end{remark}

If $\lambda_n \downarrow \lambda_*$, then we can show that there exist $u_n\in S_{\lambda_n} \subseteq{\rm int}\,C_+$ $(n\in\NN)$ such that 	
$$ u_n \xrightarrow{\text{w}} u_*\ \mbox{in}\ H_0^1(\Omega), u_* \neq 0. $$

As before (see the proof of Proposition \ref{prop3}), we have
$$ u_n^{-\gamma}\in L^s(\Omega)\ (s>N)\ \mbox{and}\ u_n^{-\gamma}\rightarrow u_*^{-\gamma}\ \mbox{for almost all}\ z\in\Omega. $$

However, we can not show that $\{u_n^{\gamma}\}_{n\geq 1}\subseteq L^s(\Omega)$ is bounded  and therefore have that
$$
\begin{array}{ll}
 \int_{\Omega}u_n^{-\gamma}hdz\rightarrow \int_{\Omega} u_*^{-\gamma}hdz\ \mbox{for all}\ h\in H_0^1(\Omega)\\ \\
 \mbox{(Vitali's theorem, see Gasinski \& Papageorgiou \cite[p. 901]{4})}.
\end{array}
$$

In addition, we can not show that there exists $c_*>0$ such that
$$ u_*\geq c_*\hat{d}. $$

It seems that $\lambda_*>0$ is not admissible (that is, $\lambda_* \notin \mathcal{L}$, hence $\mathcal{L}=(\lambda_*,+\infty)$, see \eqref{eq25}), but this needs a proof.

Another open problem is the possibility of extending this work to equations driven by the $p$-Laplacian. This extension requires a corresponding generalization of the work of Diaz, Morel \& Oswald \cite{3} to the case of the $p$-Laplacian. However, the tools of \cite{3} are particular for the Laplacian. So, it is not clear how this generalization can be achieved. Hence new techniques are needed.

\medskip
{\bf Acknowledgments.} This research was supported by the Slovenian Research Agency grants
P1-0292, J1-8131, J1-7025, N1-0064, and N1-0083, and the Romanian Ministry of Research and Innovation, CNCS--UEFISCDI, project number PN-III-P4-ID-PCE-2016-0130,
within PNCDI III.

\end{document}